\title{ Remarks on an operator Wielandt inequality}
\author{Pingping Zhang\thanks{Faculty of Science,
Chongqing University of Posts and
Telecommunications, Chongqing, 400065, People's Republic of China ({\tt E-mail: zhpp04010248@126.com}).}}
\begin{document}

\maketitle

\begin{abstract}
 Let $A$ be a positive operator on a Hilbert
space $\mathcal{H}$ with $0<m\leq A\leq M$ and $X$ and $Y$ are two
 isometries on $\mathcal{H}$ such that $X^{*}Y=0$. For every 2-positive linear map
$\Phi$, define
$$\Gamma=\left(\Phi(X^{*}AY)\Phi(Y^{*}AY)^{-1}\Phi(Y^{*}AX)\right)^{p}\Phi(X^{*}AX)^{-p}, \ \ \ p>0.$$
We consider several upper bounds for
$\frac{1}{2}|\Gamma+\Gamma^{*}|$. These bounds complement a recent
result on operator Wielandt inequality.
\end{abstract}

\begin{keywords}
Operator inequality, Wielandt inequality,
2-positive linear map, Partial isometry.
\end{keywords}

\begin{AMS}
47A63, 47A30
\end{AMS}

\pagestyle{myheadings}
\thispagestyle{plain}
\markboth{PINGPING ZHANG}{Remarks on an operator Wielandt inequality}

\section{Introduction}
As customary, we reserve $M$, $m$ for scalars and $I$ for the
identity operator. Other capital letters denote general elements of
the $C^{*}$-algebra $\mathcal{B}(\mathcal{H})$ (with unit) of all
bounded linear operator acting on a Hilbert space $(\mathcal{H},
\langle\cdot, \cdot\rangle)$. Also, we identify a scalar with the
unit multiplied by this scalar. The operator norm is denoted by
$\|\cdot\|$. In this article, the inequality between operators is in
the sense of Loewner partial order, that is, $T\geq S$ (the same as
$S\leq T$) means that $T-S$ is positive. A positive invertible
operator $T$ is naturally denoted by $T>0$.

A linear map $\Phi$ is positive if $\Phi(A)\geq0$ whenever $A\geq0$.
 We say that $\Phi$ is
2-positive if whenever the $2\times 2$ operator matrix $\left[
  \begin{array}{cc}
 A&B\\
 B^{*}&C\\
  \end{array}
\right]$ is positive, then so is $\left[
  \begin{array}{cc}
 \Phi(A)&\Phi(B)\\
 \Phi(B^{*})&\Phi(C)\\
  \end{array}
\right]$.

The Wielandt inequality \cite[p.443]{HJ1} states that if $0<m\leq
A\leq M$, and $x, y\in\mathcal{H}$ with $x\perp y$, then
$$|\langle x, Ay\rangle|^{2}\leq (\frac{M-m}{M+m})^{2}\langle x, Ay\rangle \langle y, Ax\rangle.$$

Bhatia and Davis \cite[Theorem 2]{BD} proved that if $0<m\leq A\leq
M$ and $X$ and $Y$ are two partial isometries on $\mathcal{H}$ whose
final spaces are orthogonal to each other, then for every 2-positive
linear map $\Phi$
\begin{equation}\label{equ:1}
\Phi(X^{*}AY)\Phi(Y^{*}AY)^{-1}\Phi(Y^{*}AX)\leq(\frac{M-m}{M+m})^{2}\Phi(X^{*}AX).
\end{equation}

The inequality (\ref{equ:1}) is an operator version of Wielandt inequality. In
2013, Lin presented the following conjecture.
\\\textbf{Conjecture}\cite[Conjecture 3.4]{L1}\label{con} Let $0<m\leq A\leq M$ and $X$ and $Y$
are two partial isometries on $\mathcal{H}$ whose final spaces are
orthogonal to each other. Then for every 2-positive linear map
$\Phi$
\begin{eqnarray*}
\|\Phi(X^{*}AY)\Phi(Y^{*}AY)^{-1}\Phi(Y^{*}AX)\Phi(X^{*}AX)^{-1}\|\leq
(\frac{M-m}{M+m})^{2}.
\end{eqnarray*}

Under the assumption that the conjecture is valid, the following two
inequalities follow without much additional effort
\begin{equation}\label{equ:2}
\frac{1}{2}|\Gamma+\Gamma^{*}|\leq(\frac{M-m}{M+m})^{2},
\end{equation}
and
\begin{equation}\label{equ:3}
\frac{1}{2}(\Gamma+\Gamma^{*})\leq(\frac{M-m}{M+m})^{2},
\end{equation}
where
$\Gamma=\Phi(X^{*}AY)\Phi(Y^{*}AY)^{-1}\Phi(Y^{*}AX)\Phi(X^{*}AX)^{-1}.$

If one can drop the strong assumption on the validity of the
conjecture which is still open, then apparently (\ref{equ:2}) and
(\ref{equ:3}) are very attractive alternative operator versions of
Wielandt inequality. Thus, it is interesting to estimate
$\frac{1}{2}|\Gamma+\Gamma^{*}|$ and
$\frac{1}{2}(\Gamma+\Gamma^{*})$ without the assumption that the
conjecture is true. This is our main motivation.

In this article, we consider a more general $\Gamma$ defined as
$$\Gamma=\left(\Phi(X^{*}AY)\Phi(Y^{*}AY)^{-1}\Phi(Y^{*}AX)\right)^{p}\Phi(X^{*}AX)^{-p}, \ \ \ \ p>0,$$
where $X, Y$ are isometries such that $X^{*}Y=0$.
Three bounds for both $\frac{1}{2}|\Gamma+\Gamma^{*}|$ and
$\frac{1}{2}(\Gamma+\Gamma^{*})$ are presented.

\section{Main results}

\label{} The following two lemmas are needed in our main results.
\begin{lemma}\cite[Lemma 3.5.12]{HJ2}\label{lem:3}
 For any bounded operator $X$,
 \begin{eqnarray*}
   |X|\leq tI \ \ \ \Leftrightarrow \ \ \  \|X\|\leq t \ \ \ \Leftrightarrow \ \ \ \left[
  \begin{array}{cc}
 tI&X\\
 X^{*}&tI\\
  \end{array}
\right]\geq0.
 \end{eqnarray*}
 \end{lemma}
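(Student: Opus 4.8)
The plan is to treat the middle condition $\|X\|\leq t$ as the hub and prove both equivalences through it. For the equivalence $|X|\leq tI\Leftrightarrow\|X\|\leq t$, I would start from the elementary identity $\|X\|^{2}=\|X^{*}X\|=\||X|^{2}\|=\||X|\|^{2}$, so that $\|X\|=\||X|\|$. Since $|X|=(X^{*}X)^{1/2}$ is a positive operator, and for any positive operator $P$ one has $\|P\|=\sup_{\|\xi\|=1}\langle P\xi,\xi\rangle$, the norm bound $\||X|\|\leq t$ is equivalent to $\langle |X|\xi,\xi\rangle\leq t\|\xi\|^{2}$ for every $\xi\in\mathcal{H}$, which is precisely the operator inequality $|X|\leq tI$. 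This settles the first equivalence.

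For the equivalence $\|X\|\leq t\Leftrightarrow$ positivity of the block matrix, the cleanest route is the Schur complement. Assume first $t>0$. Because the $(1,1)$ entry $tI$ is positive and invertible, the operator matrix
$$\left[\begin{array}{cc} tI & X \\ X^{*} & tI \end{array}\right]\geq 0$$
holds if and only if the Schur complement $tI-X^{*}(tI)^{-1}X=t^{-1}(t^{2}I-X^{*}X)$ is positive, i.e. $X^{*}X\leq t^{2}I$. Now $X^{*}X=|X|^{2}$ is positive, so $|X|^{2}\leq t^{2}I$ is equivalent to $\||X|^{2}\|\leq t^{2}$, hence to $\||X|\|\leq t$, hence to $\|X\|\leq t$ by the first part. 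The degenerate case $t=0$ I would check by hand: $\|X\|\leq 0$ forces $X=0$, and conversely positivity of the matrix with zero diagonal forces $X=0$.

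If one prefers to avoid quoting the Schur complement criterion in the operator setting, the same equivalence follows by expanding the quadratic form directly: positivity of the block matrix means $t\|u\|^{2}+t\|v\|^{2}+2\,\mathrm{Re}\langle Xv,u\rangle\geq 0$ for all $u,v\in\mathcal{H}$. The forward implication ($\|X\|\leq t$ implies positivity) is then immediate from Cauchy--Schwarz together with the arithmetic--geometric mean inequality $2\|u\|\|v\|\leq\|u\|^{2}+\|v\|^{2}$, while the converse comes from substituting $u=-sXv$ and minimizing the resulting scalar quadratic in $s>0$, which yields $\|Xv\|\leq t\|v\|$ for every $v$ and hence $\|X\|\leq t$.

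There is no serious obstacle here, as this is a standard characterization; the only care needed is bookkeeping. One must ensure the Schur complement statement is invoked only when $tI$ is invertible, so the case $t=0$ is handled separately rather than silently dropped, and one should note that all spectral and norm facts used (the formula $\|P\|=\sup\langle P\xi,\xi\rangle$ for positive $P$, and $\||X|^2\|=\||X|\|^2$) remain valid verbatim for a general, possibly infinite-dimensional, Hilbert space $\mathcal{H}$.
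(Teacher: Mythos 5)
Your proposal is correct, but there is nothing in the paper to compare it with: the paper does not prove this lemma at all, it simply quotes it from \cite[Lemma 3.5.12]{HJ2} and uses it as a black box. Taken on its own terms, your argument is a complete and standard derivation. The first equivalence, via $\|X\|=\||X|\|$ together with the fact that a positive operator $P$ satisfies $\|P\|=\sup_{\|\xi\|=1}\langle P\xi,\xi\rangle$, is sound and is the right way to see that $|X|\leq tI$ is a norm statement in disguise. For the second equivalence, both of your routes work: the Schur-complement criterion (positivity of the block matrix with invertible $(1,1)$ block $tI$ is equivalent to $tI-X^{*}(tI)^{-1}X\geq 0$, i.e.\ $X^{*}X\leq t^{2}I$) is valid for operators on an infinite-dimensional Hilbert space, not just for matrices, and your fallback argument expanding the quadratic form $t\|u\|^{2}+t\|v\|^{2}+2\,\mathrm{Re}\langle Xv,u\rangle\geq 0$ and then substituting $u=-sXv$ and minimizing in $s$ is self-contained and avoids quoting the Schur machinery altogether, so it is arguably the cleaner choice for a reader who only knows basic Hilbert-space theory. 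You are also right to treat the degenerate case $t=0$ separately, since the Schur criterion needs the diagonal block invertible. The only case you leave implicit is $t<0$, where all three conditions are vacuously false (because $|X|\geq 0$ forces $\langle |X|\xi,\xi\rangle\geq 0$, norms are nonnegative, and a positive block operator matrix must have positive diagonal entries), so the equivalences hold trivially; this deserves one sentence but is not a gap.
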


\begin{lemma}\cite[Theorem 6]{FINS}\label{lem:4}
 Let $0\leq A\leq B$ and $0<m \leq B\leq M$. Then
 \begin{eqnarray*}
A^{2}\leq\frac{(M+m)^{2}}{4Mm}B^{2}.
\end{eqnarray*}
 \end{lemma}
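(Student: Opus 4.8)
The plan is to collapse the operator inequality onto a single sharp norm estimate by one conjugation, and then to recognize that estimate as a Kantorovich-type trade-off.

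First I would put $C=B^{-1/2}AB^{-1/2}$. Conjugating $0\le A\le B$ by $B^{-1/2}$ gives $0\le C\le I$, and $A=B^{1/2}CB^{1/2}$. Then $A^{2}=B^{1/2}(CBC)B^{1/2}$ while $B^{2}=B^{1/2}BB^{1/2}$, so multiplying on both sides by $B^{-1/2}$ shows that the claim $A^{2}\le\frac{(M+m)^{2}}{4Mm}B^{2}$ is \emph{equivalent} to $CBC\le\frac{(M+m)^{2}}{4Mm}B$, hence (conjugating once more by $B^{-1/2}$ and writing $t=\frac{M+m}{2\sqrt{Mm}}$ for the square root of the Kantorovich constant) to
\[
\|B^{1/2}CB^{-1/2}\|\le t .
\]
By Lemma~\ref{lem:3} this is in turn the positivity of
\[
\left[\begin{array}{cc} tB^{-1}&C\\ C&tB\end{array}\right]\ge 0 ,
\]
obtained from $\left[\begin{array}{cc} tI&B^{1/2}CB^{-1/2}\\ B^{-1/2}CB^{1/2}&tI\end{array}\right]\ge 0$ by conjugation with $\mathrm{diag}(B^{-1/2},B^{1/2})$. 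Either way, the whole statement is carried by this one estimate.

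For it I would first record a refinement of Cauchy--Schwarz valid for every $0\le C\le I$: for all vectors $a,b$,
\[
|\langle Ca,b\rangle|\le\frac12\bigl(\|a\|\,\|b\|+|\langle a,b\rangle|\bigr).
\]
This follows by applying Cauchy--Schwarz to the two positive forms $\langle C\,\cdot,\cdot\rangle$ and $\langle(I-C)\,\cdot,\cdot\rangle$ and combining the products through $\sqrt{u_{1}v_{1}}+\sqrt{u_{2}v_{2}}\le\sqrt{(u_{1}+u_{2})(v_{1}+v_{2})}$, which yields $|\langle Ca,b\rangle|+|\langle(I-C)a,b\rangle|\le\|a\|\,\|b\|$; the asserted bound then drops out of $|\langle Ca,b\rangle|-|\langle a,b\rangle|\le|\langle(I-C)a,b\rangle|$. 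Applying this with $a=B^{-1/2}x$ and $b=B^{1/2}y$ for unit vectors $x,y$, and using $\langle a,b\rangle=\langle x,y\rangle$, reduces $\|B^{1/2}CB^{-1/2}\|=\sup_{x,y}|\langle CB^{-1/2}x,B^{1/2}y\rangle|$ to the scalar inequality
\[
\sqrt{\langle B^{-1}x,x\rangle\,\langle By,y\rangle}+|\langle x,y\rangle|\le\frac{M+m}{\sqrt{Mm}} .
\]

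This scalar inequality is the heart of the matter and the step I expect to be the main obstacle. Bounding the two summands separately is too crude: $\langle B^{-1}x,x\rangle\le 1/m$ and $\langle By,y\rangle\le M$ give only $\sqrt{M/m}$, which is strictly larger than the sharp $\frac{M+m}{2\sqrt{Mm}}$. The improvement rests on the fact that the two factors cannot be extremal at once: when $\langle B^{-1}x,x\rangle$ is near $1/m$ and $\langle By,y\rangle$ is near $M$, the vectors $x,y$ concentrate on opposite ends of the spectrum and $|\langle x,y\rangle|$ is forced to be small. I would make this quantitative by the substitution $f=B^{-1/2}x,\ g=B^{1/2}y$, turning the inequality into $\|f\|\,\|g\|+|\langle f,g\rangle|\le\frac{M+m}{\sqrt{Mm}}$ subject to $\langle Bf,f\rangle=\langle B^{-1}g,g\rangle=1$, and then maximizing the left-hand side; the classical scalar Kantorovich inequality $\langle Bx,x\rangle\,\langle B^{-1}x,x\rangle\le\frac{(M+m)^{2}}{4Mm}$ is the device that couples $|\langle f,g\rangle|$ to $\|f\|\,\|g\|$. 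Since only the joint behaviour of $f,g$ against the spectral projections of $B$ enters, the constrained maximum is attained when $B$ has the two-point spectrum $\{m,M\}$ and $f,g$ lie in the associated two-dimensional subspace, where the optimization becomes an explicit trigonometric computation with maximum exactly $\frac{M+m}{\sqrt{Mm}}$, realized at a rank-one configuration. This sharp coupling, not any single norm bound, is what produces the Kantorovich constant; consistently with the introduction, $\frac{(M+m)^{2}}{4Mm}=\bigl(1-(\frac{M-m}{M+m})^{2}\bigr)^{-1}$ links the estimate to the Wielandt factor.
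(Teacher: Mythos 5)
The paper never proves this lemma at all: it is imported verbatim from \cite{FINS}, so your attempt must be judged on its own merits, and as it stands it is incomplete. What is correct and verifiable: with $C=B^{-1/2}AB^{-1/2}$ the lemma is indeed equivalent to $CBC\le\frac{(M+m)^{2}}{4Mm}B$, hence to $\|B^{1/2}CB^{-1/2}\|\le t$ with $t=\frac{M+m}{2\sqrt{Mm}}$; the refinement $|\langle Ca,b\rangle|\le\frac{1}{2}\left(\|a\|\,\|b\|+|\langle a,b\rangle|\right)$ for $0\le C\le I$ is true, and your derivation of it (Cauchy--Schwarz for the forms induced by $C$ and $I-C$, the Cauchy inequality on the sum, then the triangle inequality) is sound --- this is in fact the key inequality behind the cited Bhatia--Davis paper \cite{BD}; and the substitution $a=B^{-1/2}x$, $b=B^{1/2}y$ correctly reduces everything to the scalar claim
\[
\sqrt{\langle B^{-1}x,x\rangle\,\langle By,y\rangle}+|\langle x,y\rangle|\ \le\ \frac{M+m}{\sqrt{Mm}}\qquad \textrm{for all unit vectors } x,y.
\]

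The gap is that this scalar claim --- which you yourself call ``the heart of the matter'' --- is never proved; in its place stands a plan whose two essential steps are both unjustified. First, the reduction to $B$ with two-point spectrum $\{m,M\}$ and $x,y$ in a two-dimensional subspace does not follow from the remark that only the joint behaviour against the spectral projections of $B$ enters: since $B$ and $B^{-1}$ both occur, compression to a subspace is incompatible with the constraints ($(PBP)^{-1}\ne PB^{-1}P$), and the usual extreme-point argument is unavailable because the objective is not convex in $B$ (here $\langle By,y\rangle$ is affine in $B$ and $\langle B^{-1}x,x\rangle$ is convex, but their product need not be convex). Second, the appeal to the scalar Kantorovich inequality as ``the device that couples'' the two terms is only a gesture: the natural deployment, $\langle B^{-1}x,x\rangle\le\frac{(M+m)^{2}}{4Mm}\langle Bx,x\rangle^{-1}$, yields the bound $\frac{M+m}{2\sqrt{Mm}}\sqrt{\langle By,y\rangle/\langle Bx,x\rangle}+|\langle x,y\rangle|$, which \emph{exceeds} $\frac{M+m}{\sqrt{Mm}}$ whenever $M>4m$ and $x,y$ are unit eigenvectors for $m$ and $M$ respectively, so Kantorovich used this way cannot close the argument. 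Finally, even granting the two-dimensional reduction, the optimization has zero slack: writing $u,v$ for the squared components of $x,y$ on the $M$-eigenvector, equality already holds at the interior point $u=\frac{m}{M+m}$, $v=\frac{M}{M+m}$ (both sides equal $\frac{M+m}{\sqrt{Mm}}$), so the ``explicit trigonometric computation'' you defer is itself the entire content of the lemma and cannot be waved at. In sum, you have produced a correct and even elegant chain of reformulations, but the sharp inequality that carries all the weight remains unproved.
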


Now we present the main results of this paper.
\begin{theorem}\label{theo:1}
 Let $0<m\leq A\leq M$ and let $X$ and $Y$
be two  isometries such that $X^{*}Y=0$. For every 2-positive linear map
$\Phi$, define
$$\Gamma=\left(\Phi(X^{*}AY)\Phi(Y^{*}AY)^{-1}\Phi(Y^{*}AX)\right)^{p}\Phi(X^{*}AX)^{-p}, \,\,\, p>0.$$ Then
\begin{eqnarray} \label{equ:4} \frac{1}{2}|\Gamma+\Gamma^{*}|
\leq \frac{(\frac{M-m}{M+m})^{4p}M^{2p}+m^{-2p}}{2},
\end{eqnarray}
and
\begin{eqnarray} \label{equ:5} \frac{1}{2}(\Gamma+\Gamma^{*})
\leq \frac{(\frac{M-m}{M+m})^{4p}M^{2p}+m^{-2p}}{2}.
\end{eqnarray}
\end{theorem}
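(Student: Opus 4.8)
The plan is to push everything down to bounds that are multiples of the identity and then to control the symmetric part of $\Gamma$ by an elementary positivity argument. Write $T=\Phi(X^{*}AY)\Phi(Y^{*}AY)^{-1}\Phi(Y^{*}AX)$ and $S=\Phi(X^{*}AX)$, so $\Gamma=T^{p}S^{-p}$. Because $\Phi$ preserves adjoints and $A=A^{*}$ we have $\Phi(Y^{*}AX)=\Phi(X^{*}AY)^{*}$, so $T=BC^{-1}B^{*}\ge0$ with $B=\Phi(X^{*}AY)$ and $C=\Phi(Y^{*}AY)$; moreover $Y^{*}AY\ge mI$ forces $C>0$, and likewise $S>0$. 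Since $X$ is an isometry, $X^{*}X=I$ and hence $mI\le X^{*}AX\le MI$; applying $\Phi$ (unital, $\Phi(I)=I$) gives $mI\le S\le MI$. Feeding $S\le MI$ into the operator Wielandt inequality (\ref{equ:1}), namely $0\le T\le\left(\frac{M-m}{M+m}\right)^{2}S$, collapses it to the scalar ceiling $0\le T\le\left(\frac{M-m}{M+m}\right)^{2}MI$.

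From these two scalar bounds the spectral calculus yields $T^{2p}\le\left(\frac{M-m}{M+m}\right)^{4p}M^{2p}I$ and $S^{-2p}\le m^{-2p}I$; writing $\beta=\frac12\bigl(\left(\frac{M-m}{M+m}\right)^{4p}M^{2p}+m^{-2p}\bigr)$, this says $\frac12(T^{2p}+S^{-2p})\le\beta I$. To reach the symmetric part of $\Gamma$, expand $(T^{p}\mp S^{-p})^{2}\ge0$; since $T^{p}$ and $S^{-p}$ are self-adjoint this gives the two-sided bound
\[
-(T^{2p}+S^{-2p})\le T^{p}S^{-p}+S^{-p}T^{p}=\Gamma+\Gamma^{*}\le T^{2p}+S^{-2p}.
\]
Halving and substituting $\frac12(T^{2p}+S^{-2p})\le\beta I$ gives $-\beta I\le\frac12(\Gamma+\Gamma^{*})\le\beta I$; the upper half is precisely (\ref{equ:5}), and the full two-sided bound means $\bigl\|\frac12(\Gamma+\Gamma^{*})\bigr\|\le\beta$, so Lemma~\ref{lem:3} (equivalently, the spectral theorem for the self-adjoint operator $\frac12(\Gamma+\Gamma^{*})$) delivers $\frac12|\Gamma+\Gamma^{*}|\le\beta I$, which is (\ref{equ:4}).

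The step I expect to be the main obstacle is passing from $T\le\left(\frac{M-m}{M+m}\right)^{2}S$ to a bound on $T^{2p}$. For $p>\frac12$ the exponent $2p$ exceeds $1$, where $t\mapsto t^{2p}$ is not operator monotone, so one may not raise the Loewner inequality to the power $2p$ term by term. The resolution is exactly the maneuver above: first absorb $S$ into the scalar $MI$, turning the inequality into one whose right-hand side is a multiple of the identity, and only then apply $t\mapsto t^{2p}$, which preserves a scalar bound through the spectral mapping. (Should one instead want the sharper, $S$-dependent estimates announced for the remaining two bounds, Lemma~\ref{lem:4} applied to $0\le T\le\left(\frac{M-m}{M+m}\right)^{2}S$ is the natural replacement for this scalar collapse.) A secondary point worth flagging is that $\Gamma$ is not self-adjoint, so the absolute value in (\ref{equ:4}) genuinely needs the full two-sided estimate coming from both $(T^{p}-S^{-p})^{2}\ge0$ and $(T^{p}+S^{-p})^{2}\ge0$, whereas (\ref{equ:5}) uses only the upper one.
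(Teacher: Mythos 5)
Your proof is correct and follows essentially the same route as the paper: both arguments control $\Gamma+\Gamma^{*}$ by $T^{2p}+S^{-2p}$ via the positivity of $(T^{p}\mp S^{-p})^{2}$, and then reduce to scalar bounds using the operator Wielandt inequality (\ref{equ:1}) together with $m\leq\Phi(X^{*}AX)\leq M$. The only cosmetic difference is that the paper runs the bookkeeping through operator norms (its ``simple fact'' $\|AB+BA\|\leq\|A^{2}+B^{2}\|$ plus Lemma \ref{lem:3}), whereas you stay in the Loewner order throughout and collapse to scalar multiples of $I$ before taking the $2p$-th power; the substance is identical.
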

\begin{proof}
 We need the following simple fact
\begin{eqnarray}\label{fact}
\|AB+BA\|\leq \|A^{2}+B^{2}\|, \ \ \  if \ A\geq0 \  \textrm{and} \
B\geq0.
\end{eqnarray}
Thus with the inequality (\ref{fact}), it follows from the
inequality (\ref{equ:1}) that
\begin{eqnarray}\label{equ:6}\frac{1}{2}\|\Gamma+\Gamma^{*}\|
&\leq&
\frac{1}{2}\left\|\left(\Phi(X^{*}AY)\Phi(Y^{*}AY)^{-1}\Phi(Y^{*}AX)\right)^{2p}+\Phi(X^{*}AX)^{-2p}\right\|\nonumber\\
&\leq&
\frac{1}{2}\left(\|\Phi(X^{*}AY)\Phi(Y^{*}AY)^{-1}\Phi(Y^{*}AX)\|^{2p}+\|\Phi(X^{*}AX)^{-1}\|^{2p}\right)\nonumber\\
&\leq&\frac{1}{2}\left(\|(\frac{M-m}{M+m})^{2}\Phi(X^{*}AX)\|^{2p}+\|\Phi(X^{*}AX)^{-1}\|^{2p}\right)\nonumber\\
 &\leq& \frac{(\frac{M-m}{M+m})^{4p}M^{2p}+m^{-2p}}{2}.
\end{eqnarray}
The last inequality above holds since $X$ is partial isometric and
$0<m\leq A\leq M$, then $m\leq\Phi(X^{*}AX)\leq M, \frac{1}{M}\leq
\Phi(X^{*}AX)^{-1}\leq\frac{1}{m}$.  Combining Lemma \ref{lem:3} and
the inequality (\ref{equ:6}) leads to the inequality (\ref{equ:4}).
The inequality (\ref{equ:5}) holds from (\ref{equ:4}) and the fact
that $|X|\geq X$ for any self-adjoint $X$. Thus, we complete the
proof.
 \end{proof}

Sharper result is presented as follows.
\begin{theorem}\label{theo:2}
 Under the same condition as in Theorem \ref{theo:1}. Then
\begin{eqnarray}\label{equ:7}\frac{1}{2}|\Gamma+\Gamma^{*}|
\leq\left\{
\begin{array}{l}(\frac{M-m}{M+m})^{2p}, \ \ \ \ \ \ \ \ \ \ \ \ \ \ \ \ when \ \ 0< p\leq \frac{1}{2},\\
\\
(\frac{M-m}{M+m})^{2p}(\frac{M}{m})^{p},\ \ \ \ \ \ \ \ \ \ \ \ \ when  \ \ p>\frac{1}{2}.\\
\end{array}
\right.
\end{eqnarray}
Consequently,
\begin{eqnarray}\label{equ:8}\frac{1}{2}(\Gamma+\Gamma^{*})
\leq\left\{
\begin{array}{l}(\frac{M-m}{M+m})^{2p}, \ \ \ \ \ \ \ \ \ \ \ \ \ \ \ \ when \ \ 0< p\leq \frac{1}{2},\\
\\
(\frac{M-m}{M+m})^{2p}(\frac{M}{m})^{p},\ \ \ \ \ \ \ \ \ \ \ \ \ when  \ \ p>\frac{1}{2}.\\
\end{array}
\right.
\end{eqnarray}
\end{theorem}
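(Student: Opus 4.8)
The plan is to reduce the operator inequality (\ref{equ:7}) to a scalar bound on $\|\Gamma\|$ and then to estimate $\|\Gamma\|$ by different means in the two ranges of $p$. Put $C=\Phi(Y^{*}AX)^{*}\Phi(Y^{*}AY)^{-1}\Phi(Y^{*}AX)$ and $D=\Phi(X^{*}AX)$, so that $\Gamma=C^{p}D^{-p}$. Here $C\geq0$ because $\Phi(Y^{*}AY)^{-1}>0$, and, exactly as in the proof of Theorem \ref{theo:1}, $X$ being an isometry together with $0<m\leq A\leq M$ gives $m\leq D\leq M$. With $k=(\frac{M-m}{M+m})^{2}$, the operator Wielandt inequality (\ref{equ:1}) becomes $0\leq C\leq kD$. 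Since $\frac12(\Gamma+\Gamma^{*})$ is self-adjoint, Lemma \ref{lem:3} makes the inequality $\frac12|\Gamma+\Gamma^{*}|\leq t$ equivalent to $\frac12\|\Gamma+\Gamma^{*}\|\leq t$, and the triangle inequality gives $\frac12\|\Gamma+\Gamma^{*}\|\leq\|\Gamma\|=\|C^{p}D^{-p}\|$. It therefore suffices to bound $\|C^{p}D^{-p}\|$ by the right-hand side of (\ref{equ:7}) in each case.

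For $0<p\leq\frac12$ I would use that $t\mapsto t^{2p}$ is operator monotone on $[0,\infty)$ (the L\"owner--Heinz inequality, applicable since $0<2p\leq1$). From $0\leq C\leq kD$ this gives $C^{2p}\leq k^{2p}D^{2p}$, and conjugating by the positive operator $D^{-p}$ preserves the order, so that
\[
D^{-p}C^{2p}D^{-p}\leq k^{2p}D^{-p}D^{2p}D^{-p}=k^{2p}I.
\]
As $\|C^{p}D^{-p}\|^{2}=\|D^{-p}C^{2p}D^{-p}\|$, this yields $\|\Gamma\|\leq k^{p}=(\frac{M-m}{M+m})^{2p}$, the first line of (\ref{equ:7}).

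For $p>\frac12$ operator monotonicity of $t\mapsto t^{2p}$ is no longer available, and I would instead fall back on submultiplicativity of the norm with the crude scalar bounds. Since $C$ and $D$ are positive, $\|C^{p}D^{-p}\|\leq\|C\|^{p}\|D^{-1}\|^{p}$; here $C\leq kD\leq kM\,I$ forces $\|C\|\leq kM$, while $D\geq mI$ forces $\|D^{-1}\|\leq 1/m$. Hence $\|\Gamma\|\leq(kM)^{p}(1/m)^{p}=(\frac{M-m}{M+m})^{2p}(\frac{M}{m})^{p}$, the second line of (\ref{equ:7}). In either case Lemma \ref{lem:3} turns the norm estimate back into the operator inequality (\ref{equ:7}), and (\ref{equ:8}) follows immediately from $S\leq|S|$ applied to the self-adjoint $S=\frac12(\Gamma+\Gamma^{*})$.

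I expect the genuine obstacle to be the step from the comparison $0\leq C\leq kD$ to a usable comparison of the noncommuting powers $C^{p}$ and $D^{p}$: since $C$ and $D$ need not commute, one cannot simply factor $\Gamma$ through $D^{-1/2}CD^{-1/2}$ and read off the bound. Operator monotonicity handles this cleanly only while $2p\leq1$, and the threshold $p=\frac12$ in the statement is precisely where the L\"owner--Heinz inequality stops applying; this is what forces the weaker submultiplicative argument, and the extra factor $(\frac{M}{m})^{p}$, in the regime $p>\frac12$.
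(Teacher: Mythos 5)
Your proposal is correct and takes essentially the same approach as the paper: your L\"owner--Heinz argument for $0<p\leq\frac{1}{2}$ and your submultiplicativity argument for $p>\frac{1}{2}$ are exactly the paper's inequalities (\ref{equ:10}) and (\ref{equ:12}). Your final reduction, passing from the bound on $\|\Gamma\|$ to $\frac{1}{2}|\Gamma+\Gamma^{*}|$ via the triangle inequality and Lemma \ref{lem:3}, is precisely the paper's Remark $1$; the paper's main text instead sums two positive $2\times 2$ block matrices, which is an equivalent use of Lemma \ref{lem:3}.
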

\begin{proof}
 Combining the inequality (\ref{equ:1}) and the fact that
$f(t)=t^{r}$ is operator monotone for $0\leq r\leq 1$, we have
\begin{eqnarray}\label{equ:9}
\left(\Phi(X^{*}AY)\Phi(Y^{*}AY)^{-1}\Phi(Y^{*}AX)\right)^{p}\leq
(\frac{M-m}{M+m})^{2p}\Phi(X^{*}AX)^{p},
 \end{eqnarray}
when $0< p \leq1$.\\
The inequality (\ref{equ:9}) is equivalent to
\begin{eqnarray} \label{equ:10}
\|\Gamma\|\leq (\frac{M-m}{M+m})^{2p},
 \end{eqnarray}
with $0< p \leq \frac{1}{2}$.
\\By Lemma \ref{lem:3} and the inequality (\ref{equ:10}), we have
\begin{eqnarray*}
 \left[
  \begin{array}{cc}
 (\frac{M-m}{M+m})^{2p}I&\Gamma\\
 \Gamma^{*}&(\frac{M-m}{M+m})^{2p}I\\
  \end{array}
\right]\geq0 \ \ \ \ \ \textrm{and} \ \ \ \ \left[
  \begin{array}{cc}
 (\frac{M-m}{M+m})^{2p}I&\Gamma^{*}\\
 \Gamma&(\frac{M-m}{M+m})^{2p}I\\
  \end{array}
\right]\geq0.
 \end{eqnarray*}
Summing up these two operator matrices, we have
\begin{eqnarray*}
 \left[
  \begin{array}{cc}
 2(\frac{M-m}{M+m})^{2p}I&\Gamma+\Gamma^{*}\\
 \Gamma^{*}+\Gamma&2(\frac{M-m}{M+m})^{2p}I\\
  \end{array}
\right]\geq0.
 \end{eqnarray*}
It follows from Lemma \ref{lem:3} again that
\begin{eqnarray}\label{equ:11}
 \frac{1}{2}|\Gamma+\Gamma^{*}|\leq(\frac{M-m}{M+m})^{2p},
 \end{eqnarray}
when $0<p\leq \frac{1}{2}$.\\
When $p> \frac{1}{2}$, it follows that
\begin{eqnarray}\label{equ:12}
\|\Gamma\|&\leq&
\|\left(\Phi(X^{*}AY)\Phi(Y^{*}AY)^{-1}\Phi(Y^{*}AX)\right)^{p}\|\|\Phi(X^{*}AX)^{-p}\|\nonumber\\
&=&\|\Phi(X^{*}AY)\Phi(Y^{*}AY)^{-1}\Phi(Y^{*}AX)\|^{p}\|\Phi(X^{*}AX)^{-1}\|^{p}\nonumber\\
&\leq&\|(\frac{M-m}{M+m})^{2}\Phi(X^{*}AX)\|^{p}\|\Phi(X^{*}AX)^{-1}\|^{p}  \quad \hbox{By the inequality (\ref{equ:1})}\nonumber\\
&\leq&(\frac{M-m}{M+m})^{2p}(\frac{M}{m})^{p}.
 \end{eqnarray}
Similarly, we can obtain
\begin{eqnarray}\label{equ:13}
\frac{1}{2}|\Gamma+\Gamma^{*}|
\leq(\frac{M-m}{M+m})^{2p}(\frac{M}{m})^{p},
 \end{eqnarray}
when $p>\frac{1}{2}$. \\
The inequality (\ref{equ:7}) holds by the inequalities
(\ref{equ:11}) and (\ref{equ:13}). While the inequality
(\ref{equ:8}) holds by the inequality (\ref{equ:7}). This completes
the proof of Theorem \ref{theo:2}.
\end{proof}

{\em Remark $1$.}  We provide a second method to prove the inequality
(\ref{equ:7}):
$$\|\frac{\Gamma+\Gamma^{*}}{2}\|\leq \|\Gamma\|,$$
which together with (\ref{equ:10}), (\ref{equ:12}) and Lemma
\ref{lem:3} leads to the inequality (\ref{equ:7}).

{\em Remark $2$.}\label{rem:2} The bounds in Theorem \ref{theo:2} are tighter than
those in Theorem \ref{theo:1} by the following inequalities
\begin{eqnarray*}
\frac{(\frac{M-m}{M+m})^{4p}M^{2p}+m^{-2p}}{2}\geq(\frac{M-m}{M+m})^{2p}(\frac{M}{m})^{p}\geq(\frac{M-m}{M+m})^{p}.
 \end{eqnarray*}
The first inequality above holds by the scalar AM-GM inequality, and the
 second inequality holds since $m\leq M$ and $p>0$.

Our last result is the following.
\begin{theorem}\label{theo:3}
 Under the same condition as in Theorem \ref{theo:1}. Then
\begin{eqnarray}\label{equ:15}\frac{1}{2}|\Gamma+\Gamma^{*}|
\leq(\frac{M-m}{M+m})^{2p}\left(\frac{(\frac{M}{m})^{\frac{p}{2}}+(\frac{m}{M})^{\frac{p}{2}}}{2}\right)^{\lceil
p\rceil}, \quad p>0,
\end{eqnarray}
where $\lceil p \rceil$ is the ceiling function of $p$.\\
 Consequently,
\begin{eqnarray}\label{equ:16}\frac{1}{2}(\Gamma+\Gamma^{*})
\leq(\frac{M-m}{M+m})^{2p}\left(\frac{(\frac{M}{m})^{\frac{p}{2}}+(\frac{m}{M})^{\frac{p}{2}}}{2}\right)^{
\lceil p\rceil}.
\end{eqnarray}
\end{theorem}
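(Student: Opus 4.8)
The plan is to collapse both operator inequalities to a single scalar norm estimate and then prove that estimate by an induction that raises the exponent one unit at a time. Since $\tfrac12\|\Gamma+\Gamma^{*}\|\le\|\Gamma\|$ (as in Remark $1$) and, by Lemma \ref{lem:3}, $\|\Gamma\|\le t$ forces $\tfrac12|\Gamma+\Gamma^{*}|\le tI$, it suffices to bound $\|\Gamma\|$. I would set $c=\frac{M-m}{M+m}$, $S=\Phi(X^{*}AY)\Phi(Y^{*}AY)^{-1}\Phi(Y^{*}AX)$ and $T=c^{2}\Phi(X^{*}AX)$. Then $S\ge0$, the inequality (\ref{equ:1}) reads $0\le S\le T$, and since $X$ is an isometry with $m\le A\le M$ we have $m\le\Phi(X^{*}AX)\le M$, so $c^{2}m\le T\le c^{2}M$ and $T$ is invertible. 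As $\Gamma=S^{p}(c^{-2}T)^{-p}=c^{2p}S^{p}T^{-p}$, the theorem reduces to
\[
\|S^{p}T^{-p}\|\le\kappa_{p}^{\lceil p\rceil},\qquad \kappa_{p}:=\frac{(M/m)^{p/2}+(m/M)^{p/2}}{2}.
\]

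The engine is Lemma \ref{lem:4}, because $\kappa_{p}^{2}=\frac{(M^{p}+m^{p})^{2}}{4M^{p}m^{p}}$ is exactly the constant it produces ``at level $p$'': if $0\le S^{p}\le\mu T^{p}$ for some $\mu\ge1$, then applying Lemma \ref{lem:4} with $P=S^{p}$, $Q=\mu T^{p}$ yields $S^{2p}\le\mu^{2}\kappa_{p}^{2}T^{2p}$, whence $\|S^{p}T^{-p}\|^{2}=\|T^{-p}S^{2p}T^{-p}\|\le\mu^{2}\kappa_{p}^{2}$. Thus the reduced goal follows once I establish the power inequality
\[
S^{p}\le\kappa_{p}^{\lceil p\rceil-1}T^{p}\qquad(\star)
\]
by taking $\mu=\kappa_{p}^{\lceil p\rceil-1}$.

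I would prove $(\star)$ by induction on $n=\lceil p\rceil$. For $n=1$, i.e. $0<p\le1$, $(\star)$ is just $S^{p}\le T^{p}$, which holds since $t\mapsto t^{p}$ is operator monotone. For $n\ge2$ and $n-1<p\le n$, set $q=p-1$ (so $\lceil q\rceil=n-1$) and assume $S^{q}\le\kappa_{q}^{\,n-2}T^{q}$. The hard part is precisely here: for $p>1$ the map $t\mapsto t^{p}$ is \emph{not} operator monotone, so I cannot simply raise $S\le T$ to the power $p$. Instead I combine the two inequalities $S^{q}\le\kappa_{q}^{\,n-2}T^{q}$ and $S\le T$ through the operator geometric mean $\sharp$, whose joint monotonicity gives $S^{q}\sharp S\le(\kappa_{q}^{\,n-2}T^{q})\sharp T$; since the factors on each side commute, this reads $S^{p/2}\le\kappa_{q}^{(n-2)/2}T^{p/2}$. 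Squaring it via Lemma \ref{lem:4} (at level $p/2$) gives $S^{p}\le\kappa_{q}^{\,n-2}\kappa_{p/2}^{2}T^{p}$. Two elementary scalar facts then close the induction: $\kappa_{s}$ is increasing in $s$, so $\kappa_{p-1}\le\kappa_{p}$, and $\kappa_{p/2}^{2}=\frac{1+\kappa_{p}}{2}\le\kappa_{p}$ (because $\kappa_{p}\ge1$); hence $S^{p}\le\kappa_{p}^{\,n-2}\kappa_{p}T^{p}=\kappa_{p}^{\,n-1}T^{p}$, which is $(\star)$.

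Finally, $(\star)$ together with the squaring observation gives $\|S^{p}T^{-p}\|\le\kappa_{p}^{\lceil p\rceil}$, so $\|\Gamma\|\le c^{2p}\kappa_{p}^{\lceil p\rceil}$, the right-hand side of (\ref{equ:15}). Inequality (\ref{equ:15}) then follows from Lemma \ref{lem:3} exactly as in the proof of Theorem \ref{theo:2} (form the two positive $2\times2$ blocks, add them, and read off the norm bound), and (\ref{equ:16}) follows from (\ref{equ:15}) via $X\le|X|$ for self-adjoint $X$. I expect the genuine obstacle to be the inductive increment from exponent $p-1$ to $p$ for $p>1$: monotonicity alone, which sufficed for Theorems \ref{theo:1}--\ref{theo:2}, is no longer available, so the step must be routed through the geometric-mean inequality (equivalently, a Kantorovich-type power inequality generalizing Lemma \ref{lem:4}), after which one checks that the accumulated constant collapses to $\kappa_{p}^{\lceil p\rceil}$ through $\kappa_{p/2}^{2}\le\kappa_{p}$ and $\kappa_{p-1}\le\kappa_{p}$.
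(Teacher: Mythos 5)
Your proof is correct, but it runs the induction in a genuinely different way from the paper. The paper also reduces everything to the norm bound $\|\Gamma\|\le(\frac{M-m}{M+m})^{2p}\kappa_{p}^{\lceil p\rceil}$ (your $\kappa_{p}$), also proves it by induction on $\lceil p\rceil$, and also converts it to (\ref{equ:15})--(\ref{equ:16}) via Remark $1$ and Lemma \ref{lem:3}; but its inductive step is a \emph{doubling} argument entirely inside the paper's toolkit: from the hypothesis at exponent $p$ (written as $S^{2p}\le c^{4p}\kappa_{p}^{2\lceil p\rceil}\Phi(X^{*}AX)^{2p}$, $c=\frac{M-m}{M+m}$), one application of Lemma \ref{lem:4} squares the inequality to exponent $2p$, the scalar inequality $\kappa_{p}^{2}\le\kappa_{2p}$ collapses the constant to $\kappa_{2p}^{2(\lceil p\rceil+1)}$, and the substitution $2p\mapsto p$ together with $k+1\le 2k$ yields the case $\lceil p\rceil=k+1$. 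You instead isolate the stronger intermediate \emph{operator} inequality $S^{p}\le\kappa_{p}^{\lceil p\rceil-1}T^{p}$ and increment the exponent one unit at a time, handling the non-operator-monotone step $p-1\to p$ by the joint monotonicity of the operator geometric mean $\sharp$ (so that $S^{p-1}\le\kappa_{p-1}^{n-2}T^{p-1}$ and $S\le T$ combine into $S^{p/2}\le\kappa_{p-1}^{(n-2)/2}T^{p/2}$), followed by one squaring via Lemma \ref{lem:4}; note that your closing identity $\kappa_{p/2}^{2}=\frac{1+\kappa_{p}}{2}\le\kappa_{p}$ is exactly the paper's $\kappa_{p}^{2}\le\kappa_{2p}$ at half scale, so the two arguments share the same scalar engine. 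The trade-off: the paper's route uses nothing beyond operator monotonicity of $t^{r}$ ($0\le r\le1$) and Lemma \ref{lem:4}, at the price of a somewhat awkward rescaling bookkeeping; your route is conceptually cleaner (a step-by-one induction and a power inequality of independent interest, from which the norm bound falls out in one squaring), but it imports Ando's joint monotonicity of $\sharp$, a standard fact that is nonetheless external to the paper's stated lemmas and would need a citation.
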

\begin{proof} Firstly, we prove the following inequality
\begin{eqnarray}\label{equ:17}
\|\Gamma\|\leq
(\frac{M-m}{M+m})^{2p}\left(\frac{(\frac{M}{m})^{\frac{p}{2}}+(\frac{m}{M})^{\frac{p}{2}}}{2}\right)^{\lceil
p\rceil}.
 \end{eqnarray}
The proof of (\ref{equ:17}) is proved by induction on $\lceil
p\rceil$. For $\lceil p\rceil=1$, i.e., $0<p\leq 1$, combining
(\ref{equ:9}) and Lemma \ref{lem:4} gives

$\left(\Phi(X^{*}AY)\Phi(Y^{*}AY)^{-1}\Phi(Y^{*}AX)\right)^{2p}$
\begin{eqnarray*}
&\leq&(\frac{M-m}{M+m})^{4p}
\frac{(M^{p}+m^{p})^{2}}{4M^{p}m^{p}}\Phi(X^{*}AX)^{2p}\\
&=&(\frac{M-m}{M+m})^{4p}
\left(\frac{(\frac{M}{m})^{\frac{p}{2}}+(\frac{m}{M})^{\frac{p}{2}}}{2}\right)^{2}\Phi(X^{*}AX)^{2p},\\
\end{eqnarray*}
which is equivalent to
\begin{eqnarray*}
\|\Gamma\|\leq
(\frac{M-m}{M+m})^{2p}\frac{(\frac{M}{m})^{\frac{p}{2}}+(\frac{m}{M})^{\frac{p}{2}}}{2}
=(\frac{M-m}{M+m})^{2p}\left(\frac{(\frac{M}{m})^{\frac{p}{2}}+(\frac{m}{M})^{\frac{p}{2}}}{2}\right)^{\lceil
p\rceil}.
 \end{eqnarray*}
 Thus the inequality (\ref{equ:17}) holds for
$\lceil p\rceil=1$.\\

Now, suppose that the inequality (\ref{equ:17}) holds for $\lceil
p\rceil\leq k$ $(k> 1)$. That is
\begin{eqnarray*}
\|\Gamma\|\leq
(\frac{M-m}{M+m})^{2p}\left(\frac{(\frac{M}{m})^{\frac{p}{2}}+(\frac{m}{M})^{\frac{p}{2}}}{2}\right)^{
\lceil p\rceil},
 \end{eqnarray*}
which is equivalent to

$\left(\Phi(X^{*}AY)\Phi(Y^{*}AY)^{-1}\Phi(Y^{*}AX)\right)^{2p}$
\begin{eqnarray}\label{equ:20}
&\leq&
(\frac{M-m}{M+m})^{4p}\left(\frac{(\frac{M}{m})^{\frac{p}{2}}+(\frac{m}{M})^{\frac{p}{2}}}{2}\right)^{2\lceil
p\rceil} \Phi(X^{*}AX)^{2p}.
\end{eqnarray}
Combining the inequality (\ref{equ:20}) and Lemma \ref{lem:4} leads
to

$\left(\Phi(X^{*}AY)\Phi(Y^{*}AY)^{-1}\Phi(Y^{*}AX)\right)^{4p}$
\begin{eqnarray}\label{equ:21}
&\leq&
(\frac{M-m}{M+m})^{8p}\left(\frac{(\frac{M}{m})^{\frac{p}{2}}+(\frac{m}{M})^{\frac{p}{2}}}{2}\right)^{4\lceil p\rceil}\frac{(M^{2p}+m^{2p})^{2}}{4M^{2p}m^{2p}}\Phi(X^{*}AX)^{4p}\nonumber\\
&=& (\frac{M-m}{M+m})^{8p}\left(\frac{(\frac{M}{m})^{\frac{p}{2}}+(\frac{m}{M})^{\frac{p}{2}}}{2}\right)^{4 \lceil p\rceil}\left(\frac{(\frac{M}{m})^{p}+(\frac{m}{M})^{p}}{2}\right)^{2}\Phi(X^{*}AX)^{4p}\nonumber\\
&\leq&(\frac{M-m}{M+m})^{8p}\left(\frac{(\frac{M}{m})^{p}+(\frac{m}{M})^{p}}{2}\right)^{2\lceil p\rceil}\left(\frac{(\frac{M}{m})^{p}+(\frac{m}{M})^{p}}{2}\right)^{2}\Phi(X^{*}AX)^{4p}\nonumber\\
&=&(\frac{M-m}{M+m})^{8p}\left(\frac{(\frac{M}{m})^{p}+(\frac{m}{M})^{p}}{2}\right)^{2(\lceil p\rceil+1)} \Phi(X^{*}AX)^{4p}\nonumber\\
&\leq&(\frac{M-m}{M+m})^{8p}\left(\frac{(\frac{M}{m})^{p}+(\frac{m}{M})^{p}}{2}\right)^{2(k+1)}\Phi(X^{*}AX)^{4p},
\end{eqnarray}
where $\lceil p\rceil\leq k$. The inequality (\ref{equ:21}) is equivalent to

$\left(\Phi(X^{*}AY)\Phi(Y^{*}AY)^{-1}\Phi(Y^{*}AX)\right)^{2p}$
\begin{eqnarray}\label{equ:22}
&\leq&(\frac{M-m}{M+m})^{4p}\left(\frac{(\frac{M}{m})^{\frac{p}{2}}+(\frac{m}{M})^{\frac{p}{2}}}{2}\right)^{2(k+1)}\Phi(X^{*}AX)^{2p},
\end{eqnarray}
where $\lceil p\rceil\leq 2k$. Since $k+1\leq 2k$ when $k\geq 1$,
 the inequality (\ref{equ:22}) holds for $\lceil p\rceil=k+1$,
i.e.,

$\left(\Phi(X^{*}AY)\Phi(Y^{*}AY)^{-1}\Phi(Y^{*}AX)\right)^{2p}$
\begin{eqnarray*}
&\leq&(\frac{M-m}{M+m})^{4p}\left(\frac{(\frac{M}{m})^{\frac{p}{2}}+(\frac{m}{M})^{\frac{p}{2}}}{2}\right)^{2(k+1)} \Phi(X^{*}AX)^{2p}\\
&=&
(\frac{M-m}{M+m})^{4p}\left(\frac{(\frac{M}{m})^{\frac{p}{2}}+(\frac{m}{M})^{\frac{p}{2}}}{2}\right)^{2\lceil
p\rceil}\Phi(X^{*}AX)^{2p},
\end{eqnarray*}
which is equivalent to
\begin{eqnarray*}
\|\Gamma\|\leq
(\frac{M-m}{M+m})^{2p}\left(\frac{(\frac{M}{m})^{\frac{p}{2}}+(\frac{m}{M})^{\frac{p}{2}}}{2}\right)^{\lceil
p\rceil},
 \end{eqnarray*}
when $\lceil p\rceil=k+1$. Thus the inequality (\ref{equ:17}) holds
for $\lceil p\rceil=k+1$. This completes the induction.

Combining the inequality (\ref{equ:17}), Remark $1$ and
Lemma \ref{lem:3} leads to the inequality (\ref{equ:15}). Similarly,
the inequality (\ref{equ:16}) holds by the inequality
(\ref{equ:15}).
\end{proof}

{\em Remark $3$.} Since
$\frac{(\frac{M}{m})^{\frac{p}{2}}+(\frac{m}{M})^{\frac{p}{2}}}{2}\geq
1$,  the bounds in Theorem \ref{theo:2} are sharper than those
in Theorem \ref{theo:3} when $0<p\leq \frac{1}{2}$. It follows from the fact
$\frac{m}{M}\leq 1\leq \frac{M}{m}$ that $
\frac{(\frac{M}{m})^{\frac{p}{2}}+(\frac{m}{M})^{\frac{p}{2}}}{2}\leq
(\frac{M}{m})^{\frac{p}{2}}$. Thus, the bounds
in Theorem \ref{theo:3} are tighter than those in Theorem
\ref{theo:2} when $\frac{1}{2}<p\leq 2$. When  $p$ is large enough, such as $p>2+2\log_{\frac{M}{m}}2$, the bounds in Theorem \ref{theo:2} are tighter than those
in Theorem \ref{theo:3} by the following inequalities
$$(\frac{M}{m})^{\frac{p}{2}}>2\frac{M}{m}, \quad\quad if\quad p>2+2\log_{\frac{M}{m}}2,$$
and
$$\left(\frac{(\frac{M}{m})^{\frac{p}{2}}+(\frac{m}{M})^{\frac{p}{2}}}{2}\right)^{\lceil p\rceil}>\left(\frac{(\frac{M}{m})^{\frac{p}{2}}}{2}\right)^{\lceil p \rceil}> \left(\frac{(\frac{M}{m})^{\frac{p}{2}}}{2}\right)^{p}. $$
 Therefore, neither of the
bounds in Theorem \ref{theo:2} and Theorem \ref{theo:3} are
uniformly better than the other.

\end{document}